\documentclass[12pt]{article}
\usepackage{amsmath}
\usepackage{color}
\usepackage{fancyhdr}
\usepackage{verbatim}
\usepackage{amsfonts,longtable,mathrsfs}
\usepackage{epsfig}
\usepackage{amsfonts}
\usepackage{color, float}
\usepackage{fullpage}
\usepackage[numbers,sort&compress]{natbib}

\def\ec{\end{center}}
\def\bc{\begin{center}}
\def\ec{\end{center}}
\newtheorem{definition}{Definition}[section]
\newtheorem{theorem}{Theorem}[section]
\newtheorem{rem}{Remark}[section]

\newtheorem{proof}{Proof}
\begin{document}
\bc {\bf Group reduction, formulas solutions and asymptotic behavior of a class of fourth order difference equations
  }\ec
\medskip
\bc
Tshifhiwa Murovhi$^{1}$, Mensah Folly-Gbetoula $^{1}$\footnote{Corresponding author:\\ Mensah.Folly-Gbetoula@wits.ac.za(M. Folly-Gbetoula)\\
	ORCID: 0000-0002-3046-0679}, Anani Kwassi $^{2}$
\vspace{1cm}
\\$^{1}$School of Mathematics, University of the Witwatersrand, Wits 2050, Johannesburg,\\ South Africa.\\
$^{2}$ Department of Mathematics, University of Lomé,
{Lomé},
	{01 BP 1515}, {Togo}

\ec
\begin{abstract}
\noindent
The symmetry method is a powerful and systematic approach for solving difference equations. It leverages the concept of transformations that leave a difference equation invariant, simplifying its structure and often reducing the equation to a solvable form. In this paper, the symmetry method is employed to study some class of difference equations.  Using analytical techniques and computational tools, we derive explicit solutions for these equations and establish conditions for the existence of periodic solutions. Stability analysis is performed to identify non-hyperbolic  points. Furthermore, some asymptotic properties of the  difference equations are explored, with results and graphs illustrating how initial conditions and parameter values influence the behavior of the solutions.
\end{abstract}
\textbf{Keywords}. Difference equation; symmetry; reduction; group invariant solutions.\\
\textbf{2010 MSC}. 39A10; 39A13; 39A99.
\section{Introduction and background} \setcounter{equation}{0}
Difference equations are a fundamental mathematical tool used to describe discrete values of a variable across intervals, often representing time, space, or other measurable dimensions. They are the discrete counterpart of differential equations and are extensively applied in fields such as mathematics, physics, economics, biology, and computer science, where systems naturally evolve in steps rather than continuously. 
Difference equations form the basis of discrete dynamical systems, enabling the study of stability, periodicity, and chaotic behavior in models. 
\par \noindent The study of difference equations often involves exploring their solutions' stability, convergence, periodic nature, long-term behavior and analytical solutions. However, there is still a lot to be done when it comes to deriving exact solutions.  There are several methods developed in the study of recurrence equations. In this paper, we use the well-known Lie symmetry approach developed by S. Lie  for differential equations. Its application to their discrete counterpart is somewhat new. To the best of our knowledge, S. Maeda (1987) was the first to adapt Lie methods to difference equations by developing similar techniques that allowed one to reduce the number of variables, linearize and even derive analytical solutions for difference equations. Maeda's work laid the groundwork for subsequent advancements in the application of symmetries to difference equations, enhancing their theoretical and practical analysis. Without taking out anything from the work of Maeda, it is worthwhile mentioning that the recent interest in the application of this method to difference equations is rekindled by the work of Peter Hydon in his book \cite{Hydon}. Hydon provided a systematic method for finding symmetries for difference equations. Although his algorithm works for difference equations of any order, he mainly applied it to lower order equations. The application of Lie analysis to higher order difference equations is one of the key contributions of Folly-Gbetoula and others \cite{M1, M2, M3, M4}. 
\par \noindent In this paper, we investigate the existence of solutions  of the class of fourth order difference equations of the form
\begin{align}\label{eq1}
x_{n+4}=\frac{x_{n}x_{n+3}}{a_nx_{n+1}+ b_n x_{n}x_{n+2}},
\end{align}
for some arbitrary real sequences $a_n$ and $b_n$, by deriving the vectors field that spans its Lie algebra. We then utilize the formulas for the solutions to better understand their periodic nature and asymptotic behavior. For various approaches to the study of difference equations, see \cite{N1,N2}.
\par 
\noindent 
\subsection{Background work}
As mentioned earlier, Hydon did a lot of work when it comes to the extension of Lie group analysis to difference equations. This involved generalizing continuous symmetry techniques to discrete settings. This method enables the study of integrability, reduction of complexity and formula solutions for difference equations. Most of our notation follows the ones adopted by Hydon in \cite{Hydon}. Interested readers can refer to the books \cite{Hydon, Grove} from which the following definitions and theorems are taken.
\par 
\noindent 
Consider the fourth-order difference equation of independent variable $n$ and dependent variables $x_n$ and its shifts
\begin{align}\label{eq2}
x_{n+4}=\Omega(n,x_n,x_{n+1},x_{n+2},x_{n+3}).
\end{align}
\begin{definition}
The forward shift operator is defined by 
\begin{align}
S^{(j)}:n\rightarrow n+j.
\end{align}
\end{definition}
We consider the one parameter Lie group of point transformations $\Gamma_\varepsilon$
\begin{align}\label{gtrans}
 \hat{n}=&n,\nonumber\\ \hat{x}_{n}=&x_n+ \varepsilon Q(n,x_{n})+O(\varepsilon ^2),
\end{align}
where $Q$ is known as the characteristic function and $\varepsilon$ is the parameter of the group of point  transformations. Symmetries are often represented by generators that describe infinitesimal transformations. In this paper, we define the infinitesimal prolonged generator corresponding to \eqref{gtrans} by 
 \begin{align}\label{vectorp}
 \textbf{X}^{[3]}=&Q(n,x_n)\frac{\partial\quad}{\partial x_n}+Q(n+1,x_{n+1})\frac{\partial\quad}{\partial x_{n+1}}+Q(n+2,x_{n+2})\frac{\partial\quad}{\partial x_{n+2}}\nonumber\\&+Q(n+3,x_{n+3})\frac{\partial\quad}{\partial x_{n+3}}.
 \end{align}
We refer to 
 \begin{align}\label{vector}
 	\textbf{X}=Q(n,x_n)\frac{\partial\quad}{\partial x_n}
 \end{align}
 as the infinitesimal generator.
\begin{definition}
A group of transformations $\Gamma_{\varepsilon}$ is a symmetry of \eqref{eq2} if and only if 
\begin{align}\label{symc}
S^4Q(n, u_n)-\textbf{X}^{[3]}\Omega =0
\end{align}
whenever \eqref{eq2} is true.
\end{definition} 
There are functions that remain unchanged under the symmetry transformations. They are called invariants and are often used to simplify the difference equation. \\
The symmetry condition \eqref{symc} often yields a system of equations that is solvable even though it usually involves cumbersome calculations when dealing with higher order equations. 
\par \noindent The main role of these symmetries is reduction of order or complexity of the equation. In the sequel, we  will require the knowledge of the canonical coordinate \cite{NV}:
\begin{align}
S_n=\int \frac{dx_{n}}{Q(n,x_{n})}.
\end{align}
\par \noindent 
\section{Symmetries, reduction and analytical solutions}
Consider the fourth order difference equation 
\begin{align}\label{xn}
x_{n+4}=\frac{x_{n}x_{n+3}}{a_nx_{n+1}+ b_n x_{n}x_{n+2}},
\end{align}
where $a_n$ and $b_n$ are some random sequences. Imposing the criterion of invariance \eqref{symc} on \eqref{xn} yields
\begin{align}\label{a1}
 &Q(n+4,\Omega)- \textbf{X}^{[3]}\Omega=0,
\end{align}
whenever \eqref{xn} holds. Note that $\Omega$ is the right-hand side expression in \eqref{xn}. The procedure for solving this type of functional equation consists of lengthy computations. 
\begin{itemize}
\item Firstly, we apply the differential operator $$L=
\frac{\partial }{\partial x_n}-\frac{a_nx_{n+1}x_{n+3}}{b_nx_n^2x_{n+2}+a_nx_nx_{n+1}}\frac{\partial }{\partial x_{n+3}}$$ on \eqref{a1} to get
\begin{align}
	&\frac{a_n x_{n+1}x_{n+3} }{(b_nx_nx_{n+2}+a_nx_{n+1})^2} Q'(n+3,x_{n+3})  - \frac{a_n x_{n+1} }{(b_nx_nx_{n+2}+a_nx_{n+1})^2} Q(n+3,x_{n+3})  \nonumber\\
	& + \frac{a_nb_n x_n x_{n+1}x_{n+3} }{(b_nx_nx_{n+2}+a_nx_{n+1})^3} Q(n+2,x_{n+2}) -\frac{a_nb_n x_n x_{n+2}x_{n+3} }{(b_nx_nx_{n+2}+a_nx_{n+1})^3} Q(n+1,x_{n+1})\nonumber
	\\
	&- \frac{a_n x_n x_{n+1}x_{n+3} }{(b_nx_nx_{n+2}+a_nx_{n+1})^2} Q'(n,x_{n}) +  \frac{a_n x_{n+1}x_{n+3}(2b_nx_nx_{n+2}+a_nx_{n+1}) }{x_n(b_nx_nx_{n+2}+a_nx_{n+1})^3} Q(n,x_{n}).
\end{align}
\item Secondly, we multiply through by $(a_nx_{n+1}+b_nx_nx_{n+2})^3$ and then differentiate the resulting equation with respect to $x_n$ twice to get (after simplification)
\begin{align}
&-(b_nx_nx_{n+2}+a_nx_{n+1})Q'''\left(n, x_n\right) + \frac{a_n x_{n+1} }{x_n}Q''\left(n, x_n\right) - \frac{2 \, a_n x_{n+1}}{x_n^{2}}Q'\left(n, x_n\right) \nonumber\\
&+ \frac{2 \, a_n x_{n+1} }{x_n^{3}}Q\left(n, x_n\right).
\end{align}
\item Thirdly, we apply the method of separation to obtain the following overdetermined system of differential equations: 
\begin{align}
	x_{n+1} &:& -a_nQ'''\left(n, x_n\right)+\frac{a_n  }{x_n}Q''\left(n, x_n\right)- \frac{2 \, a_n }{x_n^{2}}Q'\left(n, x_n\right) +\frac{2 \, a_n }{x_n^{3}}Q\left(n, x_n\right)  =0  \\
	x_{n+2} &:& -b_nx_n Q'''\left(n, x_n\right)=0.
\end{align}

\item Finally, we solve for $Q$ to get $Q(n, x_n)= 	\alpha _n  x_n + \beta_n x_n^2 + \gamma_n$. Then, we substitute this expression in \eqref{a1} to eliminate all dependency among the arbitrary constants. This yields $\beta_n =0$, $\gamma_n=0$ and 
\begin{align}\label{cons}
	\alpha _n-\alpha_{n+1}+ \alpha_{n+2}=0.
\end{align}
\end{itemize}
We obtain that
\begin{align}\label{charac}
Q(n,u_n)= \alpha_n  x_n,
\end{align}
where $\alpha _n$ satisfies \eqref{cons}. That is to say  $\alpha_n=\exp{(\pm i n\pi/3)}$.
The Lie algebra of the equation under study is then spanned by 
\begin{align}\label{gener}
&X_{1}=\exp{(- i n\pi/3)}x_n{\partial { x_{n}}} \quad \text{and} \quad X_{2}=\exp{( i n\pi/3)} x_{n}{\partial { x_{n}}}.
\end{align}
We let the associated canonical coordinate be
\begin{equation}\label{a7}
\tilde{V}_n= \int\frac{dx_n}{\alpha_n x_{n}} \end{equation}
and we choose the invariant (it is to check that $X_1^{[3]}( V_n )=X_2^{[3]}( V_n )=0$)
\begin{equation}  V_n=1/\exp(\alpha_n\tilde{V}_n- \alpha_{n+1}\tilde{V}_{n+1} +\alpha_{n+2}\tilde{V}_{n+2}).
\end{equation}
Clearly,
\begin{align}
V_n=\frac{x_{n+1}}{x_nx_{n+2}}.
\end{align} 
It follows that
\begin{equation}\label{a9}
V_{n+2}={a}_n V_n+ {b}_n \quad \text{and} \quad 	x_{n+6}= \frac{V_{n}V_{n+1}}{V_{n+3}V_{n+4}}x_n.
\end{equation}
Straightforward iterations of  equations in \eqref{a9} yield
\begin{align}
V_{2n+j}=&V_j\left(   \prod _{k_1=0}^{n-1}{a}_{2k_1+j}\right) +\sum _{l=0}^{n-1} \left( {b}_{2l+j}\prod _{k_2=l+1}^{n-1}{a}_{2k_2+j}\right),\quad j=0, 1,\label{a10a}\\
x_{6n+i}=& x_i \left(   \prod _{k_1=0}^{n-1}  \frac{V_{6k_1+i}V_{6k_1+i+1}}{V_{6k_1+i+3}V_{6k_1+i+4}}\right),\quad j=0, 1, 2 ,3 ,4 ,5.\label{a10b}
\end{align}
Noting that every integer can be written as $n=2\lfloor \frac{n}{2}\rfloor +\tau{(n)}$, where $\lfloor\cdot\rfloor$ denotes the floor function and $\tau{(n)}$ is the remainder when $n$ is divided by $2$, equation \eqref{a10b} can take the form
\begin{align}\label{a11}
	x_{6n+i}=& x_i \left(   \prod _{k_1=0}^{n-1}  \frac{V_{2(3k_1+\lfloor \frac{i}{2}\rfloor)+\tau(i)}V_{2(3k_1+\lfloor \frac{i+1}{2}\rfloor)+\tau(i+1)}}{V_{2(3k_1+1+\lfloor \frac{i+1}{2}\rfloor)+\tau(i+1)}V_{2(3k_1+2+\lfloor \frac{i}{2}\rfloor)+\tau(i)}}\right).
\end{align}
Employing \eqref{a10a} in \eqref{a11}, we have that
\begin{align}\label{a12}
	&x_{6n+i}= x_i  \prod _{k_1=0}^{n-1} \left( \frac{V_{\tau(i)}\left(   \prod\limits _{k_0=0}^{3k_1+\lfloor \frac{i}{2}\rfloor)-1}{a}_{2k_0+\tau(i)}\right) +\sum\limits _{l=0}^{3k_1+\lfloor \frac{i}{2}\rfloor-1} \left( {b}_{2l+\tau(i)}\prod\limits _{k_2=l+1}^{3k_1+\lfloor \frac{i}{2}\rfloor-1}{a}_{2k_2+\tau(i)}\right)}{V_{\tau(i+1)}\left(   \prod\limits _{k_0=0}^{3k_1+\lfloor \frac{i+1}{2}\rfloor}{a}_{2k_0+\tau(i+1)}\right) +\sum\limits _{l=0}^{3k_1+\lfloor \frac{i+1}{2}\rfloor} \left( {b}_{2l+\tau(i+1)}\prod\limits _{k_2=l+1}^{3k_1+\lfloor \frac{i+1}{2}\rfloor}{a}_{2k_2+\tau(i+1)}\right)}\right)\nonumber\\
	& \times  \left( \frac{V_{\tau(i+1)}\left(   \prod\limits _{k_0=0}^{3k_1+\lfloor \frac{i+1}{2}\rfloor-1}{a}_{2k_0+\tau(i+1)}\right) +\sum\limits _{l=0}^{3k_1+\lfloor \frac{i+1}{2}\rfloor-1} \left( {b}_{2l+\tau(i+1)}\prod\limits _{k_2=l+1}^{3k_1+\lfloor \frac{i+1}{2}\rfloor-1}{a}_{2k_2+\tau(i+1)}\right)}{V_{\tau(i)}\left(   \prod\limits _{k_0=0}^{3k_1+\lfloor \frac{i}{2}\rfloor)+1}{a}_{2k_0+\tau(i)}\right) +\sum\limits _{l=0}^{3k_1+\lfloor \frac{i}{2}\rfloor+1} \left( {b}_{2l+\tau(i)}\prod\limits _{k_2=l+1}^{3k_1+\lfloor \frac{i}{2}\rfloor+1}{a}_{2k_2+\tau(i)}\right)}\right),
\end{align}
for $i=0,1,2,3,4,5$. The above equation gives the closed form solution of the equation under investigation.
\begin{rem}
We remark that equation \eqref{a10b} can be written in a unified manner. In fact,
\begin{align}\label{group}
	x_{6n+j}=&x_j \prod_{k=0}^{n-1}\frac{V_{6k+j}V_{6k+j+1}} {V_{6k+j+3}V_{6k+j+4}}\\	
	=&H_{j}\exp\left[-\frac{2}{\sqrt{3}  }\sum_{k=0}^{6n+j-1} \sin\left(\frac{j-k-1}{3}\pi\right)\ln V_k    \right] \nonumber\\
	=&	H_{6n+j}\exp\left[-\frac{2}{\sqrt{3}  }\sum_{k=0}^{6n+j-1} \sin\left(\frac{6n+j-k-1}{3}\pi\right)\ln V_k    \right],
\end{align}
where the $V_k$'s are given in \eqref{a9} and $H_n$'s are such that 
\begin{align}
	H_0=x_0,\; H_1=x_1,\; H_2=\frac{x_1}{x_0},\; H_3=\frac{1}{x_0},\;H_4=\frac{1}{x_1},\; H_5=\frac{x_0}{x_1} \quad \text{and} \quad H_{6n+j}=H_j,
\end{align}
for  $j=0,1,2,3,4,5$. It follows that 
\begin{align}\label{uni}
	x_n=H_n\exp\left[-\frac{2}{\sqrt{3}  }\sum_{k=0}^{n-1} \sin\left(\frac{n-k-1}{3}\pi\right)\ln V_k    \right].
\end{align}
\end{rem}
After analyzing the general case, it is essential to explore specific instances that illustrate the theory's application and nuances. By examining these special cases, we can gain deeper insights into the behavior of the solutions under particular conditions. 
\section{The case where $a_n$ and $b_n$ are $1$-periodic sequences}
Suppose $(a_n)_{n\geq0} =(a,a,\dots)$ and $(b_n)_{n\geq0} =(b,b,\dots)$. In this instance, \eqref{a12} simplifies into 
\begin{align}\label{abcst}
	x_{6n+i}=& x_i  \prod _{k_1=0}^{n-1} \left( \frac{V_{\tau(i)}a^{3k_1+\lfloor \frac{i}{2}\rfloor} +b\sum\limits _{l=0}^{3k_1+\lfloor \frac{i}{2}\rfloor-1}a^l}{V_{\tau(i+1)}a^{3k_1+\lfloor \frac{i+1}{2}\rfloor+1} +b\sum\limits _{l=0}^{3k_1+\lfloor \frac{i+1}{2}\rfloor}a^l}\right) \left( \frac{V_{\tau(i+1)}a^{3k_1+\lfloor \frac{i+1}{2}\rfloor} +b\sum\limits _{l=0}^{3k_1+\lfloor \frac{i+1}{2}\rfloor-1}a^l}{V_{\tau(i)}a^{3k_1+\lfloor \frac{i}{2}\rfloor+2} +b\sum\limits _{l=0}^{3k_1+\lfloor \frac{i}{2}\rfloor+1} a^l}\right),\nonumber\\
	=& x_i  \prod _{k_1=0}^{n-1} \left( \frac{a^{3k_1+\lfloor \frac{i}{2}\rfloor} +\frac{b}{V_{\tau(i)}}\sum\limits _{l=0}^{3k_1+\lfloor \frac{i}{2}\rfloor-1}a^l}{a^{3k_1+\lfloor \frac{i+1}{2}\rfloor+1} +\frac{b}{V_{\tau(i+1)}}\sum\limits _{l=0}^{3k_1+\lfloor \frac{i+1}{2}\rfloor}a^l}\right) \left( \frac{a^{3k_1+\lfloor \frac{i+1}{2}\rfloor} +\frac{b}{V_{\tau(i+1)}}\sum\limits _{l=0}^{3k_1+\lfloor \frac{i+1}{2}\rfloor-1}a^l}{a^{3k_1+\lfloor \frac{i}{2}\rfloor+2} +\frac{b}{V_{\tau(i)}}\sum\limits _{l=0}^{3k_1+\lfloor \frac{i}{2}\rfloor+1} a^l}\right)
\end{align}
for $i=0,1,2,3,4,5$, where $V_i=x_{i+1}/(x_ix_{i+2})$.
\subsection{The case where $a=1$}
We note that \eqref{abcst} becomes
\begin{align}\label{a1bcst}
	x_{6n+i}	=& x_i  \prod _{k_1=0}^{n-1} \left( \frac{1 +\frac{bx_{\tau(i)}x_{\tau(i)+2}}{x_{\tau(i)+1}}(3k_1+\lfloor \frac{i}{2}\rfloor)}{1 +\frac{bx_{\tau(i+1)}x_{\tau(i+1)+2}}{x_{\tau(i+1)+1}}(3k_1+\lfloor \frac{i+1}{2}\rfloor+1)}\right) \left( \frac{1 +\frac{bx_{\tau(i+1)}x_{\tau(i+1)+2}}{x_{\tau(i+1)+1}}(3k_1+\lfloor \frac{i+1}{2}\rfloor)}{1 +\frac{bx_{\tau(i)}x_{\tau(i)+2}}{x_{\tau(i)+1}}(3k_1+\lfloor \frac{i}{2}\rfloor+2)}\right)
\end{align}
when $a=1$. More explicitly, we have 
\begin{align}\label{a1bcst'}
	x_{6n}	=& x_0  \prod _{k_1=0}^{n-1} \left( \frac{1 +\frac{bx_{0}x_{2}}{x_{1}}(3k_1)}{1 +\frac{bx_{1}x_{3}}{x_{2}}(3k_1+1)}\right) \left( \frac{1 +\frac{bx_{1}x_{3}}{x_{2}}(3k_1)}{1 +\frac{bx_{0}x_{2}}{x_{1}}(3k_1+2)}\right),\nonumber\\
	x_{6n+1}	=& x_1  \prod _{k_1=0}^{n-1} \left( \frac{1 +\frac{bx_{1}x_{3}}{x_{2}}(3k_1)}{1 +\frac{bx_{0}x_{2}}{x_{1}}(3k_1+2)}\right) \left( \frac{1 +\frac{bx_{0}x_{2}}{x_{1}}(3k_1+1)}{1 +\frac{bx_{1}x_{3}}{x_{2}}(3k_1+2)}\right),\nonumber\\
	x_{6n+2}	=& x_2  \prod _{k_1=0}^{n-1} \left( \frac{1 +\frac{bx_{0}x_{2}}{x_{1}}(3k_1+1)}{1 +\frac{bx_{1}x_{3}}{x_{2}}(3k_1+2)}\right) \left( \frac{1 +\frac{bx_{1}x_{3}}{x_{2}}(3k_1+1)}{1 +\frac{bx_{0}x_{2}}{x_{1}}(3k_1+3)}\right),
	\nonumber\\
	x_{6n+3}	=& x_3  \prod _{k_1=0}^{n-1} \left( \frac{1 +\frac{bx_{1}x_{3}}{x_{2}}(3k_1+1)}{1 +\frac{bx_{0}x_{2}}{x_{1}}(3k_1+3)}\right) \left( \frac{1 +\frac{bx_{0}x_{2}}{x_{1}}(3k_1+2)}{1 +\frac{bx_{1}x_{3}}{x_{2}}(3k_1+3)}\right),
	\nonumber\\
	x_{6n+4}	=& \frac{x_{0}x_{3}}{x_{1}+ b x_{0}x_{2}} \prod _{k_1=0}^{n-1} \left( \frac{1 +\frac{bx_{0}x_{2}}{x_{1}}(3k_1+2)}{1 +\frac{bx_{1}x_{3}}{x_{2}}(3k_1+3)}\right) \left( \frac{1 +\frac{bx_{1}x_{3}}{x_{2}}(3k_1+2)}{1 +\frac{bx_{0}x_{2}}{x_{1}}(3k_1+4)}\right),
	\nonumber\\
	x_{6n+5}	=& \frac{x_{0}x_1x_{3}}{(x_{1}+ b x_{0}x_{2})(x_2+bx_1x_3)}  \prod _{k_1=0}^{n-1} \left( \frac{1 +\frac{bx_{1}x_{3}}{x_{2}}(3k_1+2)}{1 +\frac{bx_{0}x_{2}}{x_{1}}(3k_1+4)}\right) \left( \frac{1 +\frac{bx_{0}x_{2}}{x_{1}}(3k_1+3)}{1 +\frac{bx_{1}x_{3}}{x_{2}}(3k_1+4)}\right).
\end{align}
\subsection{The case where $a\neq1$}
Here, \eqref{abcst} becomes
\begin{align}\label{anot1}
	x_{6n+i}		=& x_i  \prod _{k_1=0}^{n-1} \left( \frac{a^{3k_1+\lfloor \frac{i}{2}\rfloor} +\frac{b}{V_{\tau(i)}}\left(\frac{1-a^{3k_1+\lfloor \frac{i}{2}\rfloor}}{1-a}\right)}{a^{3k_1+\lfloor \frac{i+1}{2}\rfloor+1} +\frac{b}{V_{\tau(i+1)}}\left(\frac{1-a^{3k_1+\lfloor \frac{i+1}{2}\rfloor+1}}{1-a}\right)}\right) \left( \frac{a^{3k_1+\lfloor \frac{i+1}{2}\rfloor} +\frac{b}{V_{\tau(i+1)}}\left(\frac{1-a^{3k_1+\lfloor \frac{i+1}{2}\rfloor}}{1-a}\right)}{a^{3k_1+\lfloor \frac{i}{2}\rfloor+2} +\frac{b}{V_{\tau(i)}}\left(\frac{1-a^{3k_1+\lfloor \frac{i}{2}\rfloor+2}}{1-a}\right)}\right).
\end{align}
The case $a=-1$ simplifies considerably into  
\begin{align}\label{a-1b}
	x_{6n+i}	=& \begin{cases}
		x_i  \quad \text {if} \quad n \quad \text{even}\\
		 x_i \left(-1+\frac{b}{V_{\tau(i+1)}} \right) ^{(-1)^{\lfloor\frac{i+1}{2}\rfloor+1}}\quad \text {if} \quad n \quad \text{odd}.
	\end{cases}
\end{align}
Consequently, this result establishes the following:
\begin{align}\label{12}
	x_{12n+i}=x_i,
\end{align}
$i=0,1,2,3,4,5$ and for all $n$ when $a=-1$.
\section{Periodicity, stability and long-term behavior}
The focused study in the previous section pave way to periodicity, stability and other behaviors. We start by looking at the periodic nature of the solutions. The following theorems establish the conditions under which the solutions exhibit periodicity and characterize their  periods.
\begin{theorem}\label{1}
Consider the equation
	{\footnotesize 
		\begin{equation}\label{anotpm1}
			x_{n+4}=\frac{x_{n}x_{n+3}}{ax_{n+1}+bx_{n}x_{n+2}},
	\end{equation}}
where $|a|\neq 1$ and $b$ some non-zero constants; and $x_i, \; i={0, 1, 2, 3}$ are the initial conditions. Assume the initial conditions   are such that 
	\begin{align}\label{cdanot1}
		&\frac{x_{0}x_{2}}{x_{1}}=\frac{x_{1}x_{3}}{x_{2}}=\frac{1-a}{b}
	\quad \textit{and}
	\quad
	 x_i\neq x_{i+2}, x_i\neq x_{i+3}.	\end{align}
	Then every solution $(x_n)_{n\geq0}$ of \eqref{anotpm1} satisfying \eqref{cdanot1} is periodic with period six.
\end{theorem}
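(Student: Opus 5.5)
The plan is to work with the invariant $V_n=x_{n+1}/(x_nx_{n+2})$ and the reduced system \eqref{a9}, rather than with the long closed form \eqref{anot1}. With constant coefficients, $V_{n+2}=aV_n+b$. The hypothesis \eqref{cdanot1} says $1/V_0=1/V_1=(1-a)/b$, that is, $V_0=V_1=b/(1-a)$. This value is precisely the fixed point $V^*$ of the affine map $V\mapsto aV+b$: indeed $aV^*+b=\frac{ab+b(1-a)}{1-a}=\frac{b}{1-a}=V^*$. The condition $|a|\neq1$ ensures in particular that $a\neq1$, so $V^*$ is well defined, and $b\neq0$ ensures $V^*\neq0$, so $\ln V_k$ and the quotients in \eqref{a9} are meaningful. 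By induction on the two parity classes, using \eqref{a10a} with $j=0,1$, we then get $V_n=V^*$ for all $n\ge0$. Equivalently, each numerator and denominator factor in \eqref{anot1} reduces to $1$, since $a^m+(1-a)\frac{1-a^m}{1-a}=1$.

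Next, I substitute this into the second relation of \eqref{a9}:
\[
x_{n+6}=\frac{V_nV_{n+1}}{V_{n+3}V_{n+4}}\,x_n=x_n \quad\text{for all } n\ge0.
\]
Hence every such solution is periodic with period dividing six. The same conclusion can be read off \eqref{anot1}, where every factor of the product equals $1$, so that $x_{6n+i}=x_i$ for $i=0,\dots,5$.

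It remains to rule out the proper divisors $1,2,3$. Period $1$ or $2$ would force $x_0=x_2$, which is excluded by $x_i\neq x_{i+2}$. Period $3$ would force $x_0=x_3$, which is excluded by $x_i\neq x_{i+3}$. Consequently the prime period is exactly six. I expect this minimality step to be the main obstacle. The hypotheses on $i$ in \eqref{cdanot1} must be read as applying to the indices of the solution (at least to $i=0$) so that these inequalities are genuinely available. If they are only assumed for the initial indices, it suffices to note that $x_0\neq x_2$ and $x_0\neq x_3$ already exclude the periods $1,2,3$.

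A small additional point is well-definedness. I need the denominators $ax_{n+1}+bx_nx_{n+2}$ never to vanish. With $V_n=V^*$ this denominator equals $x_{n+1}(a+b/V^*)=x_{n+1}(a+1-a)=x_{n+1}$, which is nonzero by an inductive argument as long as $x_0,\dots,x_3\neq0$. The same identity gives an independent check of the result: $x_{n+4}=x_nx_{n+3}/x_{n+1}$.
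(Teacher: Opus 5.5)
Your proof is correct and follows essentially the paper's route. The hypothesis forces $V_0=V_1=b/(1-a)$, the fixed point of $V\mapsto aV+b$, so every $V_n$ is constant and $x_{n+6}=x_n$; the paper reads this off its closed-form product for $x_{6n+i}$, while you get it a bit more directly from the reduced system for $V_n$. You also rule out periods $1,2,3$ via $x_0\neq x_2$ and $x_0\neq x_3$ and check well-definedness via $ax_{n+1}+bx_nx_{n+2}=x_{n+1}$, two details the paper leaves implicit.
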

\begin{proof} From the first condition in \eqref{cdanot1}, the following can be inferred: $V_{\tau(i)}=V_{\tau(i+1)}=\frac{b}{1-a}$. Using this assumption in \eqref{anot1}, we have that  
	\begin{align}
		x_{6n+i}		=& x_i  \prod _{k_1=0}^{n-1} \left( \frac{a^{3k_1+\lfloor \frac{i}{2}\rfloor} +\frac{b(1-a)}{b}\left(\frac{1-a^{3k_1+\lfloor \frac{i}{2}\rfloor}}{1-a}\right)}{a^{3k_1+\lfloor \frac{i+1}{2}\rfloor+1} +\frac{b(1-a)}{b}\left(\frac{1-a^{3k_1+\lfloor \frac{i+1}{2}\rfloor+1}}{1-a}\right)}\times  \frac{a^{3k_1+\lfloor \frac{i+1}{2}\rfloor} +\frac{b(1-a)}{b}\left(\frac{1-a^{3k_1+\lfloor \frac{i+1}{2}\rfloor}}{1-a}\right)}{a^{3k_1+\lfloor \frac{i}{2}\rfloor+2} +\frac{b(1-a)}{b}\left(\frac{1-a^{3k_1+\lfloor \frac{i}{2}\rfloor+2}}{1-a}\right)}\right)\nonumber \\=&x_j.\nonumber
	\end{align}
Thus, the period of the solution divides six. With the second condition in \eqref{cdanot1}, it follows that we have a periodic solution with period six. 
\end{proof}
The graphs below provide graphical representations of the result established in Theorem \ref{1}. They illustrate how the theorem's conditions and conclusion manifest confirming the theoretical finding. The initial conditions used in Figure \ref{DM21_1} satisfy Condition \eqref{cdanot1}. We see six periodic solutions as predicted. 
\begin{figure}[H]
		\centering
		\includegraphics[scale=0.3]{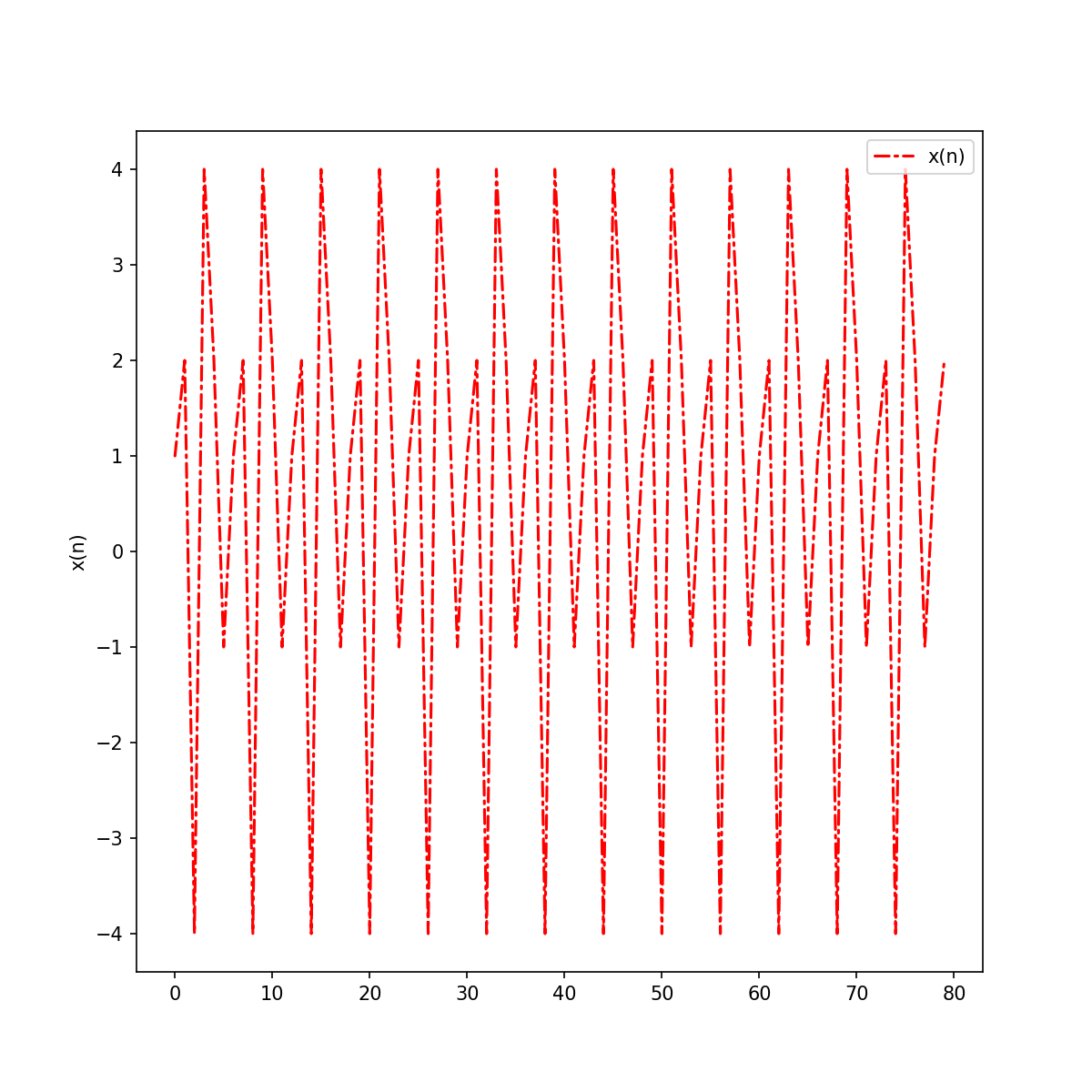}
		\caption{Graph of \eqref{anotpm1} when $a=3, b=1, x_0=1, x_1=2, x_2=-4, x_3=4$.\label{DM21_1}}
		\centering
\end{figure}
\begin{theorem}\label{2}
	Consider the equation
	{\footnotesize 
		\begin{equation}\label{a-1}
			x_{n+4}=\frac{x_{n}x_{n+3}}{-x_{n+1}+bx_{n}x_{n+2}}
	\end{equation}}
	for some constant $b\neq0$; and $x_i, \; i=\overline{0, 3}$ are the initial conditions. 
	Then, every solution $(x_n)_{n>0}$ of \eqref{a-1} is periodic with period twelve.
\end{theorem}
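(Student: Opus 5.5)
The plan is to work directly with the invariant $V_n = x_{n+1}/(x_nx_{n+2})$ and the two relations in \eqref{a9}. For $a=-1$ these avoid the long formula \eqref{a12} altogether.

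\textbf{Step 1: the period of $V_n$.} With $a_n=-1$ and $b_n=b$, the first relation in \eqref{a9} reads $V_{n+2} = -V_n + b$. Applying it twice gives
\begin{align*}
V_{n+4} = -V_{n+2} + b = V_n - b + b = V_n .
\end{align*}
So $(V_n)$ is periodic with period $4$, and it only takes the values $V_0$, $V_1$, $b-V_0$ and $b-V_1$. This uses the standing assumption that the solution is well defined, i.e.\ all $x_n\neq 0$ and the denominators never vanish.

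\textbf{Step 2: $x_{n+6}$ in terms of $x_n$.} By the second relation in \eqref{a9},
\begin{align*}
x_{n+6} = \frac{V_nV_{n+1}}{V_{n+3}V_{n+4}}\,x_n = \frac{V_nV_{n+1}}{V_{n+3}V_{n}}\,x_n = \frac{V_{n+1}}{V_{n+3}}\,x_n ,
\end{align*}
where we used $V_{n+4}=V_n$. Now substitute $V_{n+3} = b - V_{n+1}$. Setting $c_n := V_{n+1}/(b-V_{n+1})$, this gives $x_{n+6}=c_n x_n$.

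\textbf{Step 3: the period-twelve relation.} Apply Step 2 twice:
\begin{align*}
x_{n+12} = c_{n+6}\,c_n\,x_n .
\end{align*}
Since $V$ has period $4$, we have $V_{n+7}=V_{n+3}=b-V_{n+1}$. Therefore
\begin{align*}
c_{n+6} = \frac{V_{n+7}}{b-V_{n+7}} = \frac{b-V_{n+1}}{V_{n+1}} = \frac{1}{c_n},
\end{align*}
so $c_{n+6}c_n=1$ and $x_{n+12}=x_n$ for every $n$. This agrees with \eqref{a-1b} and \eqref{12}, since $c_n$ equals $\bigl(-1+b/V_{\tau(\cdot)}\bigr)^{\pm1}$.

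\textbf{Main obstacle.} The delicate point is the meaning of ``period twelve''. As in the proof of Theorem \ref{1}, the argument above only shows that the period divides $12$. For special initial data the period is smaller: if $V_1 = b/2$, then $c_n=1$ and the period divides $6$. I would state the conclusion as $x_{n+12}=x_n$, i.e.\ twelve-periodic in the paper's sense. If a minimal-period claim is wanted, I would add non-degeneracy conditions analogous to \eqref{cdanot1}, namely $V_0,V_1\neq b/2$ together with $x_i\neq x_{i+4}$ and $x_i\neq x_{i+6}$, to exclude the divisors $1,2,3,4,6$. I would also remark that well-definedness requires $V_0,V_1\notin\{0,b\}$.
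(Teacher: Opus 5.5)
Your proof is correct. It reaches the result by a more direct route than the paper.

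The paper's route is to specialize the general closed-form product formula \eqref{anot1} (itself obtained from \eqref{a12}) to $a=-1$. It asserts that the formula ``simplifies considerably'' to \eqref{a-1b}, then reads off $x_{12n+i}=x_i$ and lists the twelve terms of the cycle.

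You never use the closed form. From the two reduced relations \eqref{a9} alone you get:
\begin{itemize}
\item $V_{n+4}=V_n$;
\item hence $x_{n+6}=c_nx_n$ with $c_n=V_{n+1}/(b-V_{n+1})$;
\item and $c_{n+6}=1/c_n$.
\end{itemize}
This is shorter. It also supplies the cancellation the paper leaves implicit in passing from \eqref{anot1} to \eqref{a-1b}: $c$ is $4$-periodic with $c_{n+2}=1/c_n$, so successive multipliers in $x_{6n+i}=x_i\prod c_{6m+i}$ alternate between $c_i$ and $1/c_i$. The paper's route, in exchange, writes every term of the cycle explicitly in the initial data. You would recover that list by evaluating $c_0,\dots,c_5$.

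Your caveat about the meaning of ``period twelve'' is well founded. Both your proof and the paper's only establish $x_{n+12}=x_n$, and smaller periods really occur:
\begin{itemize}
\item The equilibrium $2/b$ is a fixed point.
\item Non-constant $4$-periodic solutions exist whenever $x_1+x_3=bx_0x_2$ and $x_0+x_2=bx_1x_3$, for example $b=1$, $x_0=1$, $x_2=10$, $x_{1,3}=5\pm\sqrt{14}$.
\end{itemize}
So your exclusion of period $4$ is genuinely needed.

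There is one small slip in that remark. $V_1=b/2$ alone gives $c_n=1$ only for even $n$, because $c_1=(b-V_0)/V_0$. The period divides $6$ exactly when $V_0=V_1=b/2$, which is the $a=-1$ instance of condition \eqref{cdanot1}. Correspondingly, to rule out period $6$ it suffices that one of $V_0,V_1$ differs from $b/2$.
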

\begin{proof} The result follows from \eqref{a-1b} and \eqref{12}. In fact, the twelve periodic solutions are the following: 
\begin{align}
&\dots,	 x_0, x_1, x_2, x_3, \frac{x_0x_3}{-x_1+bx_0x_2}, \frac{x_0x_1x_3}{(-x_1+bx_0x_2)(-x_2+bx_1x_3)}, x_0\left(-1+\frac{bx_1x_3}{x_2}\right) ^{-1}, \nonumber \\& x_1\left(-1+\frac{bx_0x_2}{x_1}\right),
x_2\left(-1+\frac{bx_1x_3}{x_2}\right),
x_3\left(-1+\frac{bx_0x_2}{x_1}\right) ^{-1},
\frac{x_0x_3}{-x_1+bx_0x_2}\left(-1+\frac{bx_1x_3}{x_2}\right) ^{-1},\nonumber\\
& \frac{x_0x_1x_3}{(-x_1+bx_0x_2)(-x_2+bx_1x_3)}\left(-1+\frac{bx_0x_2}{x_1}\right), \dots.
\end{align}
\end{proof}
Similarly, Figure \ref{DM21_2} shows a graphical representation of the result established in Theorem \ref{2}. We observe twelve periodic solutions as predicted. 
\begin{figure}[H]
		\centering
				\includegraphics[scale=0.3]{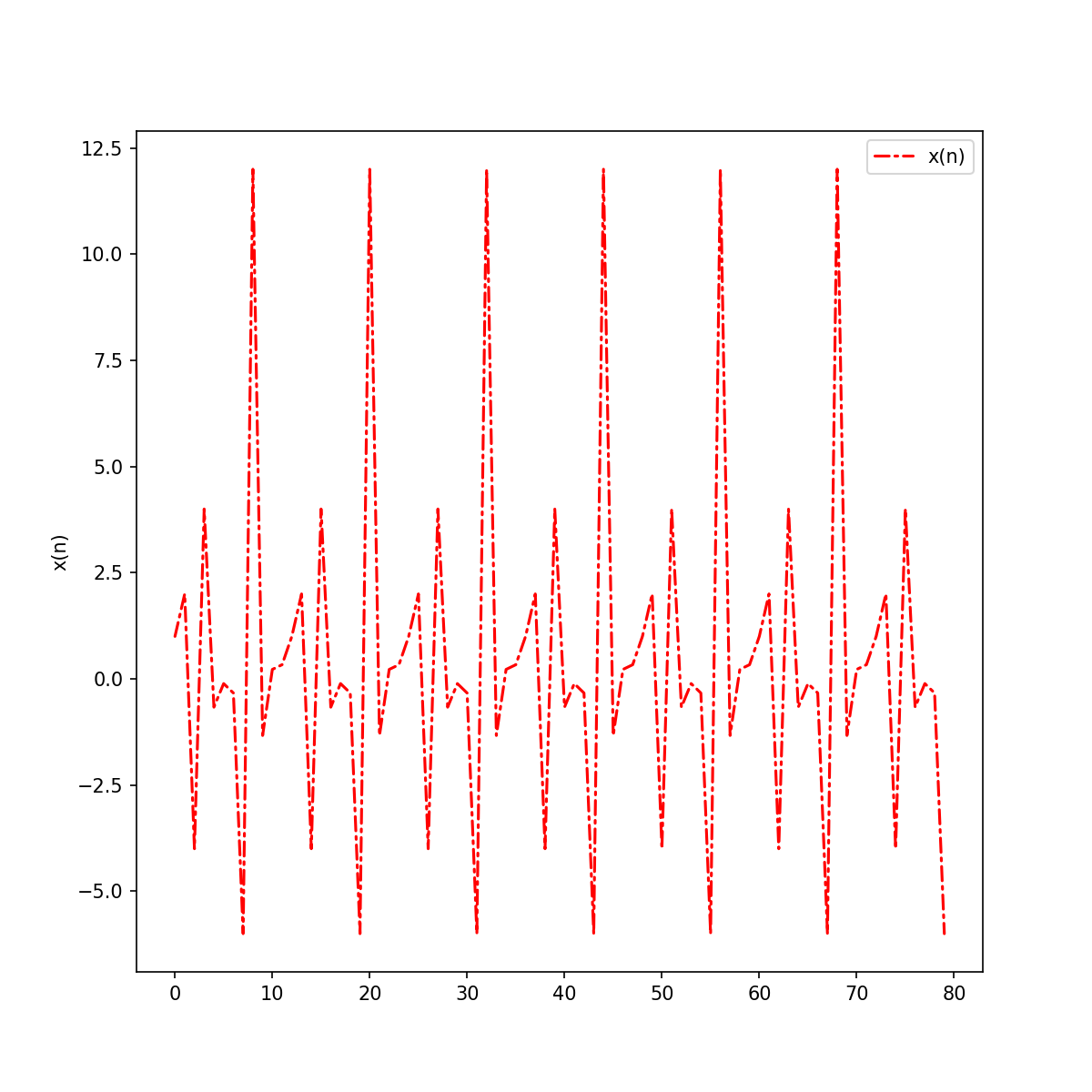}
		\caption{Graph of \eqref{a-1} when $a=-1, b=1, x_0=1, x_1=2, x_2=-4, x_3=4$.\label{DM21_2}}
		\centering
\end{figure}
\begin{theorem}
Given Equation \eqref{anotpm1} with $a\neq 1$. The unique equilibrium point ${x}=(1-a)/b$ is non-hyperbolic.
\end{theorem}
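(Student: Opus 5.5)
We first locate the equilibrium, then linearize the map at it and factor the characteristic polynomial. The factorization should expose roots on the unit circle, which is exactly what non-hyperbolicity means.

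\emph{Equilibrium.} Setting $x_n=x$ for all $n$ in \eqref{anotpm1} gives $x=\frac{x^2}{ax+bx^2}$. Since $x=0$ makes the right-hand side undefined, we must have $x\neq 0$. Cancelling then gives $ax+bx^2=x$, that is $a+bx=1$. So $x=(1-a)/b$ is the only equilibrium. The hypothesis $a\neq 1$ ensures it is nonzero and hence admissible.

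\emph{Linearization.} Write $f(u_0,u_1,u_2,u_3)=\frac{u_0u_3}{au_1+bu_0u_2}$ and put $D=au_1+bu_0u_2$. At the equilibrium, $D=x(a+bx)=x$, and we use $bx=1-a$ throughout. The partial derivatives at $(x,x,x,x)$ are
\begin{align*}
\frac{\partial f}{\partial u_0}&=\frac{u_3}{D}-\frac{bu_0u_2u_3}{D^2}=1-bx=a, &
\frac{\partial f}{\partial u_1}&=-\frac{au_0u_3}{D^2}=-a,\\
\frac{\partial f}{\partial u_2}&=-\frac{bu_0^2u_3}{D^2}=-bx=a-1, &
\frac{\partial f}{\partial u_3}&=\frac{u_0}{D}=1.
\end{align*}
The linearized equation $y_{n+4}=y_{n+3}+(a-1)y_{n+2}-ay_{n+1}+ay_n$ therefore has characteristic polynomial
\begin{align*}
p(\lambda)=\lambda^4-\lambda^3-(a-1)\lambda^2+a\lambda-a .
\end{align*}

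\emph{Factorization.} This is the step I expect to be the main obstacle, since a general quartic gives no direct handle on its roots. The symmetry analysis suggests a factor: the characteristic $Q=\alpha_n x_n$ with $\alpha_n=\exp(\pm in\pi/3)$ points to the factor $\lambda^2-\lambda+1$. I would try
\begin{align*}
p(\lambda)=(\lambda^2-\lambda+1)(\lambda^2+p\lambda+q).
\end{align*}
Expanding and matching the $\lambda^3$ coefficient gives $p=0$. Matching the constant term then gives $q=-a$. The $\lambda^2$ and $\lambda$ coefficients are automatically consistent with these values. Hence
\begin{align*}
p(\lambda)=(\lambda^2-\lambda+1)(\lambda^2-a).
\end{align*}

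\emph{Conclusion.} The roots of $p$ are $e^{\pm i\pi/3}$ and $\pm\sqrt a$. The first two have modulus exactly one for every admissible $a$ and $b$. The linearization therefore has eigenvalues on the unit circle, so the equilibrium $x=(1-a)/b$ is non-hyperbolic. This holds regardless of the size of $|a|$; the remaining roots $\pm\sqrt a$ only determine the other directions.
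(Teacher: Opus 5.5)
Your proof is correct and follows the same route as the paper: obtain the equilibrium from $a+bx=1$, linearize to get $\lambda^4-\lambda^3-(a-1)\lambda^2+a\lambda-a$, factor it as $(\lambda^2-\lambda+1)(\lambda^2-a)$, and note that the roots $e^{\pm i\pi/3}$ have unit modulus. You also supply the partial derivatives and the coefficient matching that the paper omits, and you linearize at $(1-a)/b$ rather than "near $0$" as the paper's text misstates.
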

\begin{proof}
The equilibrium point is obtained by solving the equation $x=x^2/(ax+bx^2)$. Solving this equation for $x$, we found that $x=(1-a)/b$. The characteristic equation of  \eqref{anotpm1} near $0$ is therefore given by 
\begin{align}\label{charanot1}
	\lambda ^{4}-\lambda ^{3}-(a-1)\lambda ^{2}+ a\lambda-a=0,
	\end{align}
that is,	
\begin{align}
	(\lambda ^{2}-\lambda +1)( \lambda ^{2}-{a})=0.
\end{align}
The equation $\lambda ^2 -\lambda +1=0$ has two roots whose moduli are all equal to 1. Consequently, $x=(1-a)/b$ is  non-hyperbolic when $a\neq 1$.
\end{proof}
\section{Conclusion}
In this study, we explored the Lie symmetries, analytical solutions, periodicity, and stability of a family of fourth order difference equations, providing a analysis of their dynamic behavior. By employing the Lie symmetry method, we identified symmetries that facilitated the reduction and simplification of the equation under study, enabling the derivation of explicit analytical solutions. These solutions not only validate the underlying theoretical framework but also highlight the applicability of symmetry-based approaches in solving discrete dynamical systems. The investigation into periodicity revealed conditions under which solutions exhibit repetitive behavior, offering insights into the long-term dynamics of these systems. Conditions of existence of 6-, 12- periodic solutions and  non-hyperbolic equilibrium points were clearly stated.


\end{document}